\documentclass[authoryear,preprint,12pt]{elsarticle}
\usepackage{amsmath}
\usepackage{amsthm}
\usepackage{amssymb}
\usepackage[plainpages=false,pdfpagelabels]{hyperref}

\newtheorem*{satz}{Theorem}
\newtheorem{prop}{Proposition}

\newcommand{\e}{\ensuremath{\mathbb{E}}}
\newcommand{\var}{\ensuremath{\mathbb{V}}ar}
\newcommand{\id}{1\hspace{-0,9ex}1}
\newcommand{\tr}{\ensuremath{\mathrm{tr}}} 

\date{\today}

\journal{Statistics and Probability Letters}

\begin{document}

\begin{frontmatter}

\title{Semicircle law of Tyler's M-estimator for scatter}

\author[gabi]{Gabriel Frahm}
\ead{gabriel.frahm@hsu-hh.de}

\author[konsti]{Konstantin Glombek\corref{cor}\fnref{dfg}}
\ead{glombek@wiso.uni-koeln.de}

\address[gabi]{Helmut-Schmidt-Universit\"at, Chair for Applied Stochastics, D-22043 Hamburg, Germany}

\address[konsti]{Universit\"at zu K\"oln, Seminar f\"ur Wirtschafts- und Sozialstatistik, Albertus-Magnus-Platz, D-50923 K\"oln, Germany}
\fntext[dfg]{The author is supported by a grant of the German Research Foundation (DFG).}
\cortext[cor]{Corresponding author. Tel. +49 221 4707711}

\begin{abstract}
This paper analyzes the spectral properties of Tyler's M-estimator for scatter $T$. It is shown that if a multivariate sample stems from a generalized spherically distributed population and the sample size $n$ and the dimension $d$ both go to infinity while $d/n\rightarrow0$, then the empirical spectral distribution of $\sqrt{n/d}(T-I)$ converges in probability to the semicircle law, where $I$ is the identity matrix. In contrast to that of the sample covariance matrix, this convergence does not necessarily require the sample vectors to be componentwise independent. Further, moments of the generalized spherical population do not have to exist.
\end{abstract}

\begin{keyword}
Tyler's M-estimator\sep Random matrix\sep Spectral distribution\sep Semicircle law
\end{keyword}

\end{frontmatter}

\section{Introduction}

The spectral analysis of large dimensional random matrices has become an active field of research during the last decades because of its broad applicability to many practical problems such as wireless communications, statistics and finance. A main tool of this analysis is the empirical spectral distribution function (ESD) of a $d-$dimensional matrix $A$ having real eigenvalues. Denote by $\lambda_1(A)\leq\ldots\leq\lambda_d(A)$ the eigenvalues of $A$. Then, the ESD of $A$ is defined as $$F^A(x)=\frac{1}{d}\sum_{i=1}^d \id_{(-\infty,x]}(\lambda_i(A)),$$
where $$\id_{M}(x)=\begin{cases}1,&x\in M\\0, &\text{else}\end{cases}$$
for some set $M$. If the entries of $A$ are random variables, then, as $d\rightarrow\infty$ and assuming a certain distribution of these entries, the ESD of $A$ may converge to a non-random limit in some sense. \citet{wigner} investigates special random matrices in the context of quantum mechanics, so-called Wigner matrices, whose expected limiting ESD is the well known semicircle law, a continuous distribution function with density
\begin{equation}
\mathrm{d}w(x)=\frac{1}{2\pi}\sqrt{4-x^2}\id_{[-2,2]}(x) \ \mathrm{d}x \ .
\label{semi}
\end{equation}
In statistics, the limiting behavior of the ESD of large dimensional sample covariance matrices has attained much interest. These matrices are of the type $$S=\frac{1}{n}\sum_{j=1}^nX_jX_j^t,$$ where $X_j=(X_{1j},\ldots,X_{dj})^t,1\leq j\leq n,$ is a centered $d-$dimensional sample. Assuming that $X_{ij}\overset{\text{i.i.d.}}{\sim} (0,1)$, the almost sure limit of the ESD of $S$ under the asymptotics $n,d\rightarrow\infty$ and $d/n\rightarrow y\in(0,\infty)$ is derived in \citet{marcenko}, the famous Mar\v{c}enko-Pastur law. It is given by a distribution function $F_y(x)$ satisfying 
\begin{equation}
\label{mplaw}
\mathrm{d}F_y(x)=\left(1-\frac{1}{y}\right)^+ \ \mathrm{d}\delta_0(x)+f_y(x) \ \mathrm{d}x,
\end{equation}
where
$$f_y(x)=\frac{1}{2\pi xy}\sqrt{(a_+-x)(x-a_-)}\id_{[a_-,a_+]}(x),$$
$a_\pm=(1\pm\sqrt{y})^2$ and $\delta_0$ is the Dirac delta function in $0$. Further, a suitable standardization of $S$ yields the semicircle law as $n,d\rightarrow\infty, d/n\rightarrow y=0$. More precisely, \citet{bai2} show that if $X_{ij}\overset{\text{i.i.d.}}{\sim} (0,1)$ and $\e|X_{11}|^4<\infty$, then, as $n,d\rightarrow\infty$ and $d/n\rightarrow 0$, the ESD of
$$S^*:=\sqrt{\frac{n}{d}}(S-I),$$
converges almost surely to the semicircle law (\ref{semi}). Here, $I$ denotes the identity matrix.\par
There is research about the question of how the strict assumption of an identical and independent distribution of the $X_{ij}$ can be weakened. \citet{bai3} give an overview that a Lindeberg-type condition for the $X_{ij}$ can replace an identical distribution. Further, they show that the convergence of the ESD of $S^*$ to the semicircle law is still valid if $S^*$ is sparse which means that some entries of $S^*$ are missing in a certain sense. But still the assumption of horizontal and vertical independence of the $X_{ij}$ is required. We show in this paper that if we do not use the sample covariance matrix but \textit{Tyler's M-estimator} to estimate the true covariance matrix of a standard normal population and standardize this estimator in the same manner as $S$, then the associated ESD also converges almost surely to the semicircle law (\ref{semi}). Moreover, it is shown that this convergence holds even if the population is \textit{generalized spherically} distributed. Since the components of a generalized spherical population are not independent (except in the case of normality), the condition of vertical independence of the $X_{ij}$ is weakened. We see further that certain moment conditions can be relaxed as well.\par
The next section briefly provides all necessary information about shapes matrices and their connection to Tyler's M-estimator so that the main results can be stated. Section \ref{proofs} will then give the proof of the results which is followed by a small outlook for future research.

\section{Tyler's M-estimator}

The shape matrix of a $d-$dimensional population $X$ is defined as the symmetric and positive definite solution $\Omega=\Omega(X)\in\mathbb{R}^{d\times d}$ of the equation
\begin{equation}
\label{shape}
\e\left(\frac{\Omega^{-1/2}(X-\mu)(X-\mu)^t\Omega^{-1/2}}{(X-\mu)^t\Omega^{-1}(X-\mu)}\right)=\frac{1}{d}I,
\end{equation}
where $\mu\in\mathbb{R}^d$ is the center of the distribution of $X$ and $\Omega^{-1/2}$ denotes the symmetric root of the inverse $\Omega^{-1}$. Note that $\mu$ does not have to be the expectation of $X$ but may also be its median (see also \citet{frahm2}, Section 3). In the following, only $\mu=0$ will be of interest so that we will not further discuss this point. Equation (\ref{shape}) determines $\Omega$ uniquely up to a scalar multiple so that a suitable scaling is needed. We choose $\tr(\Omega)=d$, where $\tr$ denotes the trace operator, and refer to \citet{frahm3} for a detailed discussion about shape matrices and their scales.\par
If $X$ is spherically distributed, i.e.,
\begin{equation}
\label{sphere}
X\overset{\mathrm{d}}{=}RU^{(d)},
\end{equation}
where $R\geq0$ is a scalar random variable being independent from $U^{(d)}\sim\mathcal{U}(S^{d-1})$, where $\mathcal{U}(S^{d-1})$ denotes the uniform distribution over the unit sphere in $\mathbb{R}^d$, then $\Omega(X)=\Omega(U^{(d)})=I$ with respect to the center $\mu=0$. We see that the shape matrix does not depend on $R$. Especially, if $R\overset{\mathrm{d}}{=}\sqrt{\chi^2_d}$, where $\chi_d^2$ denotes a random variable which is $\chi^2$ distributed with $d$ degrees of freedom, we obtain that $X\sim N_d(0,I)$, i.e., $X$ is standard normal so that the covariance and shape matrix of $X$ agree.\par 
Now, we assume that $\mu$ is known and set w.l.o.g. $\mu=0$. The shape matrix of $X$ can be estimated by its sample counterpart $\hat{\Omega}$ as the solution of
$$\frac{1}{n}\sum_{j=1}^n\frac{\hat{\Omega}^{-1/2}X_jX_j^t\hat{\Omega}^{-1/2}}{X_j^t{\hat{\Omega}^{-1}}X_j}=\frac{1}{d}I$$
or, equivalently, of
\begin{equation}
\label{eq2}
\hat{\Omega}=\frac{d}{n}\sum_{j=1}^n\frac{X_jX_j^t}{X_j^t\hat{\Omega}^{-1}X_j},
\end{equation}
where $X_1,\ldots,X_n$ is a sample drawn from $X$. The estimator $\hat{\Omega}$ is known as Tyler's M-estimator for scatter and is introduced by \citet{tyler}, which is why we set $T=\hat{\Omega}$ for short. Again, we choose the scaling $\tr(T)=d$ in order to uniquely define $T$ as it is also proposed by \cite{tyler}. The existence of $T$ is assured if $n\geq d$ (see \citet{tyler2}, Lemma 2.1.) and its computation can be done by performing an iteration scheme leading to an unique symmetric and positive definite solution of Equation (\ref{eq2}) (see also \citet{tyler}).\par
Since $\Omega(X)=\Omega(RX)$ for a scalar random variable $R$, this invariance is inherited to $T=T(X)$ meaning that $T(X)=T(Y)$ if $X\overset{\mathrm{d}}{=}RY$ for a $d-$dimensional random variable $Y$. This property is quite appealing for elliptical populations (with center $0$), i.e., $$X\overset{\mathrm{d}}{=}R\Lambda U^{(k)},$$ where $\Lambda\in\mathbb{R}^{d\times k}$ and $R\geq0$ is a scalar random variable being independent from $U^{(k)}\sim\mathcal{U}(S^{k-1})$ (see also \citet{fang}, Chapter 2). Hence, $T$ is distribution-free within the class of elliptical distributions.\par
\cite{frahm_phd} introduces the class of \textit{generalized elliptical distributions} whose members have the same stochastic representation as an elliptical random variable. But this class additionally allows for $R<0$ and dependence between $R$ and $U^{(k)}$. These features are quite useful when dealing with financial data as it is mentioned in \citet{frahm2}. It is clear that $T$ is also distribution-free within that class. \citet{frahm_phd}, Chapter 4, derives $T$ as a maximum likelihood estimator for $\Omega(X)$ assuming that $X$ is generalized elliptically distributed. Thus, $T$ has many desired properties such as consistency and asymptotic normality.\par
Tyler's M-estimator has another outstanding property concerning robustness. \citet{tyler} shows that $T$ is the ``most robust'' estimator for the shape matrix of an elliptical population. This means that if $X$ is elliptical, then the maximum asymptotic variance of $T$ is a minimum within the set of maximum asymptotic variances of all consistent and asymptotically normally distributed shape matrix estimators.\par  
Following the idea of generalized elliptical distributions, we define the class of \textit{generalized spherical distributions} as the set of all random variables $X$ having the stochastic representation (\ref{sphere}), where $R\not\equiv0$ is a scalar random variable and $U^{(d)}\sim\mathcal{U}(S^{d-1})$. Similar to generalized elliptical distributions, $R$ and $U^{(d)}$ may depend on each other and $R$ may also take negative values. Clearly, we have $\Omega(X)=I$ with respect to the center $\mu=0$ if $X$ is generalized spherically distributed. This class of distributions is of interest in the following theorem.

\begin{satz}
Let $X_1,\ldots,X_n$ be an i.i.d. sample drawn from a generalized spherical population $X$ of dimension $d$. Let $T$ be Tyler's M-estimator being normalized so that $\tr(T)=d$. Then, as $n,d\rightarrow\infty$ and $d/n\rightarrow0$, 
\begin{enumerate}
	\item the ESD of $$T^*:=\sqrt{\frac{n}{d}}\left(T-I\right)$$
converges in probability to the semicircle law (\ref{semi}) and
 \item $\lambda_1(T^*)\overset{P}{\longrightarrow}-2,\lambda_d(T^*)\overset{P}{\longrightarrow}2 \ .$
\end{enumerate}
\end{satz}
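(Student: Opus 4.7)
The plan is to exploit the invariance of $T$ to reduce to a Gaussian sample and then approximate $T$ successively by a ``one-step'' estimator $\hat T$ and by the ordinary sample covariance matrix $S$, for which the semicircle law is already available in \citet{bai2}. Since each summand on the right-hand side of \eqref{eq2} is invariant under $X_j\mapsto c_jX_j$ and depends on $X_j$ only through $X_jX_j^t$, the estimator $T$ depends on $X_1,\ldots,X_n$ only through the rank-one projections $U_jU_j^t$, with $U_j=X_j/\|X_j\|$ uniform on $S^{d-1}$ in the generalized spherical model. Hence the law of $T$ coincides with the one obtained from an i.i.d.\ sample $X_j\sim N_d(0,I)$, and I would work under this assumption throughout.

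I would then introduce the one-step estimator $\hat T:=\tfrac{d}{n}\sum_{j=1}^n U_jU_j^t$, which is the value of the right-hand side of \eqref{eq2} at $I$ and automatically satisfies $\tr(\hat T)=d$, together with $S=\tfrac{1}{n}\sum_jX_jX_j^t$. Using independence of $U_j$ and $\|X_j\|^2\sim\chi^2_d$ in the Gaussian model, the centered representation $\hat T-S=\tfrac{1}{n}\sum_j(d-\|X_j\|^2)U_jU_j^t$ has zero mean and a direct second-moment computation yields $\e\|\hat T-S\|_F^2=2d/n$. By the Hoffman--Wielandt inequality, the squared $L^2$-Wasserstein distance between the ESDs of $\sqrt{n/d}(\hat T-I)$ and $\sqrt{n/d}(S-I)$ is at most $\tfrac{n}{d^2}\|\hat T-S\|_F^2=O_P(1/d)$, so by \citet{bai2} the ESD of $\sqrt{n/d}(\hat T-I)$ converges in probability to the semicircle law \eqref{semi}.

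It therefore remains to show $\|T-\hat T\|_{op}=o_P(\sqrt{d/n})$, for which I would view $\hat T$ as the first iterate of the fixed-point map defining $T$. Inserting the ansatz $T=I+\sqrt{d/n}\,A$ into \eqref{eq2}, expanding $(X_j^tT^{-1}X_j)^{-1}=\|X_j\|^{-2}\bigl(1+\sqrt{d/n}\,U_j^tAU_j+O(d/n)\bigr)$, and using the moment identity $\e[UU^t(U^tAU)]=\tfrac{1}{d(d+2)}(\tr(A)I+2A)$ together with the trace normalization $\tr A=0$, one sees that the linearized fixed-point map contracts with factor $2/(d+2)\to 0$ in expectation, so a single iteration from $I$ brings $\hat T$ within $o_P(\sqrt{d/n})$ of $T$ in operator norm. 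The main obstacle is making this linearization uniformly rigorous: it requires Hanson--Wright-type concentration for the quadratic forms $U_j^tAU_j$, a uniform bound on $\max_j|U_j^tAU_j|$, and an a priori estimate $\|T-I\|_{op}=O_P(\sqrt{d/n})$ for Tyler's estimator under $d/n\to 0$. Once the comparison is secured, part (1) follows by combining it with the preceding paragraph, and part (2) follows by combining it with Weyl's inequality and the extreme-eigenvalue form of Bai--Yin applied to $\sqrt{n/d}(\hat T-I)$.
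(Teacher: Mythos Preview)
Your reduction to the Gaussian case is the same as the paper's, and your Hoffman--Wielandt argument comparing $\hat T^*$ to $S^*$ is both correct and cleaner than the paper's moment-by-moment comparison. But the heart of your proposal---the operator-norm bound $\|T-\hat T\|_{op}=o_P(\sqrt{d/n})$---is left as a sketch, and this is precisely where the real work lies. The contraction heuristic you outline (linearize the fixed-point map at $I$, observe that its derivative has spectral radius $2/(d+2)$) is essentially the starting point of Theorem~5.4 in \citet{dumbgen}, which the paper simply invokes to obtain $\e\|T-S\|_2=o(\sqrt{d/n})$ directly, without any intermediate estimator. Turning your sketch into a proof requires the very a~priori bound $\|T-I\|_{op}=O_P(\sqrt{d/n})$ that you yourself flag as missing, and extracting this from the fixed-point equation alone is not routine; D\"umbgen's argument runs several pages of careful bootstrapping. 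So the obstacle you acknowledge is real and is, in effect, the entire substance of the comparison the theorem needs.

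There is a second, smaller gap in your treatment of part~(2). Bai--Yin-type extreme-eigenvalue results assume sample vectors with i.i.d.\ coordinates, which $\sqrt{d}\,U_j$ does not have, so they do not apply directly to $\hat T^*$. Nor can you transfer from $S^*$: your Frobenius bound $\e\|\hat T-S\|_F^2=2d/n$ only yields $\|\hat T^*-S^*\|_{op}\le\|\hat T^*-S^*\|_F=O_P(1)$, which does not pin down individual eigenvalues. The paper avoids this by staying with $S^*$ throughout, combining $\|T^*-S^*\|_2\overset{P}{\to}0$ (from D\"umbgen) with $\lambda_1(S^*)\overset{\text{a.s.}}{\to}-2$, $\lambda_d(S^*)\overset{\text{a.s.}}{\to}2$ from \citet{dette} and Weyl's inequality. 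If you wish to retain the intermediate $\hat T$, you would additionally need an operator-norm bound $\|\hat T-S\|_{op}=o_P(\sqrt{d/n})$, for instance via a matrix-Bernstein argument applied to $\tfrac{1}{n}\sum_j(d-\|X_j\|^2)U_jU_j^t$.
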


Here, ``$\overset{P}{\longrightarrow}$'' denotes convergence in probability. Some consequences of the theorem are worth to point out. First, in contrast to the convergence of the ESD of $S^*$ to the semicircle law, it is not assumed that the components of $X$ are independent. The uncorrelatedness of the components of $U$ is required instead because of the distribution-freeness of $T$. Further, moments of $X$ do not have to exist. For example, if $R$ and $U$ are independent and $R\overset{\text{d}}{=}\sqrt{dF_{d,p}}$, where $F_{d,p}$ is a $F-$distributed random variable with $d$ and $p$ degrees of freedom, the population has a $d-$dimensional t-distribution with $p$ degrees of freedom. In the case of $p=1$, we obtain the Cauchy distribution whose expectation does not exist. This is a sharp contrast to the sample covariance matrix which even requires the existence of the fourth moment of the components of $X$ (see \citet{bai2}).

\section{Proof of the theorem}
\label{proofs}

In the following, we consider $n$ as a integer-valued function of $d$ with $\lim_{d\rightarrow\infty}n(d)=\infty$ and $d=o(n)$ as $d\rightarrow\infty$. So, we just write $d\rightarrow\infty$ for $d,n\rightarrow\infty$ and $d/n\rightarrow0$. Almost sure convergence will be denoted by $\overset{\text{a.s.}}{\rightarrow}$ and $\overset{\mathcal{L}^1}{\rightarrow}$ stands for convergence in mean. Further, we set $\|A\|_2:=\max\{|\lambda_1(A)|,|\lambda_d(A)|\}$ for a symmetric $d-$dimensional matrix $A$ which may be deterministic or at random. In the latter case, $\|A\|_2$ is also a random variable.\par
First, because of the distribution-freeness of $T$, we may assume w.l.o.g. that $X\sim N_d(0,I)$, i.e., $R\overset{\text{d}}{=}\sqrt{\chi^2_d}$ being independent from $U^{(d)}$. We will now prove the first part of the theorem by applying the moment convergence theorem (MCT) with respect to that population. The $m-$th moment of the ESD of $T^*$ is given by
$$\int x^m \ \mathrm{d}F^{T^*}(x)=\frac{1}{d}\sum_{i=1}^d \lambda_i^m(T^*)=\frac{1}{d}\tr\left([T^*]^m\right) \ .$$
Since the semicircle law is uniquely defined by its moments (because its support is compact), the MCT is applicable. It says that the convergence of
$$\frac{1}{d}\tr\left([T^*]^m\right)\overset{P}{\longrightarrow} \int x^m \ \mathrm{d}w(x)$$
for every fixed $m\in\mathbb{N}$ as $d\rightarrow\infty$ is sufficient for
$$F^{T^*}(x)\overset{P}{\longrightarrow} w(x)$$
as $d\rightarrow\infty$ (see also the introduction in \citet{bai}). We will show that 
\begin{equation}
\label{nr1}
\left|\frac{1}{d}\tr\left([T^*]^m\right)-\frac{1}{d}\tr\left([S^*]^m\right)\right|\overset{P}{\longrightarrow} 0
\end{equation}
as $d\rightarrow\infty$. Since we have from \citet{bai2} that
\begin{equation*}
\frac{1}{d}\tr\left([S^*]^m\right)\overset{P}{\longrightarrow}\int x^m \ \mathrm{d}w(x)
\end{equation*}
as $d\rightarrow\infty$, the triangle inequality leads to the result.  Now, we need four small propositions.

\begin{prop}
\label{prop1b}
Let $X\sim N_d(0,I)$. Then:
$$\|T^*-S^*\|_2\overset{P}{\longrightarrow}0 \text{ as }d\rightarrow\infty$$
\end{prop}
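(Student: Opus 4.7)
By the distribution-freeness of Tyler's estimator within the generalized spherical class, I may assume $X\sim N_d(0,I)$ and write $X_j=R_jU_j$ with $R_j^2\sim\chi^2_d$ independent of $U_j\sim\mathcal{U}(S^{d-1})$. The natural comparison object is
\[
M:=\frac{d}{n}\sum_{j=1}^n U_jU_j^t,
\]
which is exactly what the right-hand side of the Tyler equation reduces to when $T=I$ is plugged in. The plan is to show $\sqrt{n/d}\,\|M-S\|_2\to 0$ and $\sqrt{n/d}\,\|T-M\|_2\to 0$ in probability, and then conclude by the triangle inequality.

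The bound on $M-S$ is elementary: using $\e R_j^2=d$ and the independence of $R_j$ from $U_j$,
\[
M-S=\frac{1}{n}\sum_{j=1}^n(d-R_j^2)U_jU_j^t
\]
is a sum of i.i.d.\ mean-zero rank-one matrices with intrinsic variance $\|\sum_j\e A_j^2\|_2=2n$. After truncating $R_j^2$ on the high-probability event $\{\max_j|R_j^2-d|\leq c\sqrt{d\log n}\}$, a matrix Bernstein inequality yields $\|M-S\|_2=O_P(\sqrt{\log d/n})$, so $\sqrt{n/d}\,\|M-S\|_2=O_P(\sqrt{\log d/d})\to 0$.

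The bound on $T-M$ is perturbative. Rewriting the Tyler equation in the directions (Tyler's estimator depends on $X_j$ only through the unit vector $U_j$) and subtracting $M$ gives
\[
T-M=\frac{d}{n}\sum_{j=1}^n\frac{1-U_j^tT^{-1}U_j}{U_j^tT^{-1}U_j}\,U_jU_j^t.
\]
Combined with a preliminary $\|T-I\|_2=o_P(1)$ (classical Tyler consistency, adapted to growing dimension via a contraction argument for the Tyler map near $I$ using the bound $\|M-I\|_2=O_P(\sqrt{d/n})$), the expansion $T^{-1}=I-(T-I)+O(\|T-I\|_2^2)$ reduces the leading contribution of $T-M$ to
\[
L(T-I):=\frac{d}{n}\sum_{j=1}^n\bigl(U_j^t(T-I)U_j\bigr)U_jU_j^t.
\]
The uniform-sphere fourth-moment identity $\e[UU^t\Delta UU^t]=(d(d+2))^{-1}(\tr(\Delta)I+2\Delta)$, together with the traceless normalization $\tr(T-I)=0$, shows that $\e L(T-I)=2(T-I)/(d+2)$ has operator norm only $O(\|T-I\|_2/d)$, while a matrix concentration inequality controls the fluctuation of $L(T-I)$ around its mean at order $\|T-I\|_2\sqrt{d\log d/n}$. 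Bootstrapping the resulting implicit inequality against $\|T-I\|_2\leq\|T-M\|_2+\|M-I\|_2$ with $\|M-I\|_2=O_P(\sqrt{d/n})$ gives $\|T-M\|_2=O_P(d\sqrt{\log d}/n)$, and therefore $\sqrt{n/d}\,\|T-M\|_2=O_P(\sqrt{d\log d/n})\to 0$.

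The main obstacle is this perturbation step: because $T$ is defined only implicitly by a fixed-point equation, one must linearize around $I$ and, crucially, exploit the normalization $\tr(T)=d$ to cancel the would-be $O(1)$ coefficient in the linearization; otherwise $\e L(T-I)$ would be of the same order as $T-I$ and the bootstrap would fail. All other ingredients are standard matrix concentration inequalities for sums of rank-one matrices built from i.i.d.\ uniform vectors on the sphere.
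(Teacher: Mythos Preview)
Your route is entirely different from the paper's. The paper's proof of this proposition is three lines: it quotes D\"umbgen's Theorem~5.4, which already gives $\e\|T-S\|_2=o(\sqrt{d/n})$ for standard normal data, multiplies by $\sqrt{n/d}$ to obtain $\e\|T^*-S^*\|_2\to 0$, and concludes via $L^1\Rightarrow P$. All the analytic content is outsourced to the cited result.

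What you sketch is, in effect, a re-derivation of D\"umbgen's theorem through linearization of the Tyler fixed-point equation around $I$. The decomposition through $M$, the identity $\mathcal L(\Delta)=\frac{d}{n}\sum_j(U_j^\top\Delta U_j)U_jU_j^\top$, and the use of $\tr(T-I)=0$ to kill the $\tr(\Delta)I$ term in the fourth-moment formula are all correct and are indeed the key structural observations. So the strategy is sound and buys you a self-contained argument where the paper has none.

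That said, the two steps you label as routine are exactly the hard parts of D\"umbgen's proof. First, ``classical Tyler consistency'' is a fixed-$d$ statement; getting $\|T-I\|_2=o_P(1)$ when $d\to\infty$ already requires the contraction argument you defer to, so invoking it as a preliminary is circular as written. The clean way is to run a single Banach fixed-point argument: show the trace-normalized Tyler map is a contraction on a spectral-norm ball of radius $O(\sqrt{d/n})$ around $I$ and maps that ball into itself (since $\Psi(I)=M$), whence $T$ lies in the ball. Second, in your concentration step $T-I$ depends on the same $U_j$'s, so pointwise matrix Bernstein for a fixed $\Delta$ is not enough; you need the operator-norm bound on the random linear map $\mathcal L-\e\mathcal L$ acting on symmetric matrices, i.e.\ a matrix Bernstein inequality at the $d^2\times d^2$ level (for $\frac{d}{n}\sum_j(U_j\otimes U_j)(U_j\otimes U_j)^\top$ minus its mean), which does give the $O_P(\sqrt{d\log d/n})$ you claim but is not what your phrasing ``fluctuation of $L(T-I)$ around its mean'' suggests. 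Once these are made precise, your argument goes through and yields the same conclusion as the cited theorem.
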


\begin{proof}
\citet{dumbgen} shows in Theorem 5.4.:
$$\e\|T-S\|_2=o\left(\sqrt{d/n}\right) \text{ as }d\rightarrow\infty$$
It follows that
$$\e\|T^*-S^*\|_2=\sqrt{n/d} \ \e\|T-S\|_2=o(1) \text{ as }d\rightarrow\infty$$
which means
$$\|T^*-S^*\|_2\overset{\mathcal{L}^1}{\longrightarrow}0  \text{ as }d\rightarrow\infty$$
which implies the assertion.
\end{proof}

Next, we have:

\begin{prop}
\label{prop2}
Let $A,B$ be $d-$dimensional symmetric matrices. Then: $$\forall_{1\leq i\leq d}:|\lambda_i(A)-\lambda_i(B)|\leq\|A-B\|_2$$
\end{prop}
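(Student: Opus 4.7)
The plan is to invoke the Courant--Fischer min--max characterization of the eigenvalues of a symmetric matrix, which yields the standard Weyl perturbation bound from which the claim is immediate. Concretely, for a symmetric $d$-dimensional matrix $M$ with ordered eigenvalues $\lambda_1(M)\leq\ldots\leq\lambda_d(M)$ (the convention used throughout the paper) one has
\[
\lambda_i(M)=\min_{\substack{V\subseteq\mathbb{R}^d\\ \dim V=d-i+1}}\ \max_{\substack{x\in V\\ \|x\|=1}}\ x^t M x.
\]
This identity is the only non-trivial input; I would cite it from a standard linear algebra reference rather than re-derive it.

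With the min--max formula in hand, I would write $A=B+(A-B)$ and estimate, for an arbitrary unit vector $x$,
\[
x^t A x \;=\; x^t B x + x^t(A-B)x,\qquad |x^t(A-B)x|\;\leq\;\|A-B\|_2,
\]
where the second bound uses that $A-B$ is symmetric, so its numerical radius coincides with $\max\{|\lambda_1(A-B)|,|\lambda_d(A-B)|\}=\|A-B\|_2$. Taking the maximum over unit vectors $x$ in any $(d-i+1)$-dimensional subspace $V$ and then the minimum over all such $V$ gives $\lambda_i(A)\leq \lambda_i(B)+\|A-B\|_2$. Swapping the roles of $A$ and $B$ (noting $\|A-B\|_2=\|B-A\|_2$) yields the matching lower bound, and combining the two produces the asserted inequality $|\lambda_i(A)-\lambda_i(B)|\leq \|A-B\|_2$ for every $i\in\{1,\ldots,d\}$.

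There is no real obstacle here; the statement is a textbook consequence of Weyl's inequality, and the write-up could even be reduced to a one-line citation. The only point requiring any attention is the identification $\sup_{\|x\|=1}|x^t(A-B)x|=\|A-B\|_2$, which relies on the symmetry of the matrices; this is precisely why the hypothesis that $A$ and $B$ are symmetric enters, and it is the reason the proposition is stated only for symmetric matrices, which is exactly the setting in which it will be applied later (to $A=T^*$ and $B=S^*$ together with Proposition~\ref{prop1b}).
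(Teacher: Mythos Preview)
Your argument is correct. The paper's proof is slightly different in packaging: instead of invoking the Courant--Fischer min--max formula, it quotes the generalized Weyl inequality $\lambda_i(A)+\lambda_{d+1-i}(B)\leq\lambda_d(A+B)$ (citing a textbook) and applies it with $B$ replaced by $-B$ to obtain $\lambda_i(A)-\lambda_i(B)=\lambda_i(A)+\lambda_{d+1-i}(-B)\leq\lambda_d(A-B)\leq\|A-B\|_2$, then swaps the roles of $A$ and $B$ for the other sign. The two routes are essentially equivalent---the Weyl inequality the paper cites is itself a one-line consequence of Courant--Fischer---so the difference is only in which standard fact is taken as the black box; your version is a touch more self-contained, the paper's a touch shorter.
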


\begin{proof}
The generalized Weyl inequality says that $\lambda_i(A)+\lambda_{d+1-i}(B)\leq \lambda_d(A+B)$ (see, e.g., \citet{schott}, Theorem 3.23.). Then, we have either
\begin{align*}
0&\leq \lambda_i(A)-\lambda_i(B)=\lambda_i(A)+\lambda_{d+1-i}(-B)\leq \lambda_d(A-B)\leq\|A-B\|_2\\
&\text{or}\\
0&\leq \lambda_i(B)-\lambda_i(A)=\lambda_i(B)+\lambda_{d+1-i}(-A)\leq \lambda_d(B-A)\\
&=-\lambda_1(A-B)\leq\|A-B\|_2 \ .
\end{align*}
\end{proof}

\begin{prop}
\label{prop3}
Let $X\sim N_d(0,I)$. Then, we have that $$\|S^*\|_2\overset{\text{a.s.}}{\longrightarrow}2  \text{ as }d\rightarrow\infty \ .$$
\end{prop}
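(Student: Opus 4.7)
The plan is to read Proposition~\ref{prop3} as a direct consequence of classical extreme eigenvalue results for Wishart matrices, and then only verify that the scaling by $\sqrt{n/d}$ does not spoil the convergence in the regime $d/n\to 0$.

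Since $X\sim N_d(0,I)$, $nS=\sum_{j=1}^n X_j X_j^t$ is a standard Wishart matrix. The Bai--Yin type theorems (see the results cited in \citet{bai2}) state that, under the i.i.d.\ assumption with mean $0$, variance $1$ and finite fourth moment, and $d/n\to y\in[0,1]$,
\begin{equation*}
\lambda_d(S)\overset{\text{a.s.}}{\longrightarrow}(1+\sqrt{y})^2,\qquad \lambda_1(S)\overset{\text{a.s.}}{\longrightarrow}(1-\sqrt{y})^2 \ .
\end{equation*}
The sharper, finite-sample form of these statements, also due to Bai--Yin and available under any finite moment condition (so a fortiori under Gaussianity), asserts that one may replace the limit $y$ by the ratio $d/n$:
\begin{equation*}
\lambda_d(S)-(1+\sqrt{d/n})^2\overset{\text{a.s.}}{\longrightarrow}0,\qquad \lambda_1(S)-(1-\sqrt{d/n})^2\overset{\text{a.s.}}{\longrightarrow}0 \ .
\end{equation*}
This is the only substantive input from the Wishart literature that I would need.

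From here the proof is arithmetic. Multiplying out the squares yields $(1\pm\sqrt{d/n})^2-1=\pm 2\sqrt{d/n}+d/n$, so
\begin{equation*}
\sqrt{n/d}\bigl(\lambda_d(S)-1\bigr)=2+\sqrt{d/n}+o(1)\overset{\text{a.s.}}{\longrightarrow}2,
\end{equation*}
and analogously $\sqrt{n/d}(\lambda_1(S)-1)\to -2$ almost surely. Because the eigenvalues of $S^*=\sqrt{n/d}(S-I)$ are precisely $\sqrt{n/d}(\lambda_i(S)-1)$, one has $\lambda_d(S^*)\to 2$ and $\lambda_1(S^*)\to -2$ a.s., and therefore $\|S^*\|_2=\max\{|\lambda_1(S^*)|,|\lambda_d(S^*)|\}\overset{\text{a.s.}}{\longrightarrow}2$.

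The only delicate point is invoking the Bai--Yin statement in the sharper form with $d/n$ in place of its limit $y$, because the proposition needs the second-order term $2\sqrt{d/n}$ and not merely the first-order convergence $\lambda_d(S)\to 1$. This refinement is however standard (and present in the papers already cited for the semicircle convergence of $S^*$), so no new argument is required; beyond that, everything reduces to a one-line Taylor expansion of $(1\pm\sqrt{d/n})^2$.
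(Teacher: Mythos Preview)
Your approach is morally the same as the paper's---cite an extreme-eigenvalue result from the literature and conclude---but the specific form you invoke is not strong enough, and the arithmetic step hides this. You state the ``sharper form'' as
\[
\lambda_d(S)-(1+\sqrt{d/n})^2\overset{\text{a.s.}}{\longrightarrow}0,
\]
i.e.\ an $o(1)$ error, and then write $\sqrt{n/d}(\lambda_d(S)-1)=2+\sqrt{d/n}+o(1)$. But multiplying an $o(1)$ term by $\sqrt{n/d}$, which tends to infinity in the regime $d/n\to 0$, does \emph{not} yield $o(1)$. What you actually need is the rate $\lambda_d(S)-(1+\sqrt{d/n})^2=o(\sqrt{d/n})$ a.s., which is a strictly stronger statement than the one you wrote down. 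You flag the issue in your last paragraph, but the form of the lemma you display is still the insufficient one.

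The paper sidesteps this entirely by citing a result (Corollary~2.2 in \citet{dette}) that is already formulated at the level of $S^*$: it gives directly $\lambda_1(S^*)\overset{\text{a.s.}}{\longrightarrow}-2$ and $\lambda_d(S^*)\overset{\text{a.s.}}{\longrightarrow}2$ as $d\to\infty$, so no rescaling of an $o(1)$ remainder is needed. If you want to keep your route through results on $S$, you must either quote a Bai--Yin statement with the explicit $o(\sqrt{d/n})$ rate, or---simpler---cite a source that proves the extreme-eigenvalue convergence for $S^*$ directly (such results do appear in \citet{bai2} and related work, as you suspect).
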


\begin{proof}
From \citet{dette}, Corollary 2.2., it follows that $\lambda_1(S^*)\overset{\text{a.s.}}{\longrightarrow}-2$ and $\lambda_d(S^*)\overset{\text{a.s.}}{\longrightarrow}2$ as $d\rightarrow\infty$.
\end{proof}

Now, define the interval $$B_i:=\left\{\alpha\lambda_i(T^*)+(1-\alpha)\lambda_i(S^*) \  \Big\vert \ \alpha\in[0,1]\right\} \ .$$

\begin{prop}
\label{prop4}
Again, let $X\sim N_d(0,I)$. Then, we have that
$$\sup_{\lambda\in\bigcup_{i=1}^dB_i}|\lambda|\overset{P}{\longrightarrow}2 \text{ as }d\rightarrow\infty \ .$$
\end{prop}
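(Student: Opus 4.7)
The plan is to observe that each $B_i$ is simply the closed interval between $\lambda_i(T^*)$ and $\lambda_i(S^*)$, so $|\cdot|$ attains its supremum on $B_i$ at one of the two endpoints. Consequently
\[
\sup_{\lambda\in\bigcup_{i=1}^d B_i}|\lambda|=\max_{1\leq i\leq d}\max\bigl(|\lambda_i(T^*)|,|\lambda_i(S^*)|\bigr)=\max\bigl(\|T^*\|_2,\|S^*\|_2\bigr),
\]
because the maximum in absolute value of the eigenvalues of a symmetric matrix is realized at $\lambda_1$ or $\lambda_d$. This reduces the problem to controlling $\|T^*\|_2$ and $\|S^*\|_2$.

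Next, I would combine Propositions \ref{prop1b} and \ref{prop2} to compare the two operator norms. Proposition \ref{prop2}, applied with $i=1$ and $i=d$, yields
\[
\bigl|\,|\lambda_i(T^*)|-|\lambda_i(S^*)|\,\bigr|\leq|\lambda_i(T^*)-\lambda_i(S^*)|\leq\|T^*-S^*\|_2,\qquad i\in\{1,d\},
\]
so that $\bigl|\|T^*\|_2-\|S^*\|_2\bigr|\leq\|T^*-S^*\|_2$. Proposition \ref{prop1b} then gives $\|T^*\|_2-\|S^*\|_2\overset{P}{\longrightarrow}0$, and since $\|S^*\|_2\overset{\text{a.s.}}{\longrightarrow}2$ by Proposition \ref{prop3}, also $\|T^*\|_2\overset{P}{\longrightarrow}2$.

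Finally, the elementary identity $\max(a,b)=\tfrac{1}{2}(a+b+|a-b|)$ shows that $\max(\|T^*\|_2,\|S^*\|_2)\overset{P}{\longrightarrow}2$, which is the claim. There is no real obstacle here: the proposition is essentially a bookkeeping consequence of the three preceding propositions, with the only nontrivial observation being the reduction of the supremum over $\bigcup_i B_i$ to $\max(\|T^*\|_2,\|S^*\|_2)$.
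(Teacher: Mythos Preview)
Your proof is correct and rests on the same three propositions as the paper, but the packaging differs slightly. The paper does not compute the supremum exactly; instead, for an arbitrary $\lambda\in\bigcup_i B_i$ it writes $\lambda=\alpha(\lambda_j(T^*)-\lambda_j(S^*))+\lambda_j(S^*)$ and bounds
\[
|\lambda|\le \alpha\,\|T^*-S^*\|_2+\|S^*\|_2\overset{P}{\longrightarrow}2,
\]
invoking Propositions \ref{prop1b}--\ref{prop3} directly on this upper bound. Your route first establishes the exact identity $\sup_{\lambda\in\bigcup_i B_i}|\lambda|=\max(\|T^*\|_2,\|S^*\|_2)$ via the endpoint observation, and only then passes to the limit. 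What this buys you is a two--sided statement for free: since $\|S^*\|_2$ itself is realized in the supremum, convergence (rather than a mere upper bound) is immediate. The paper's argument as written gives only the upper bound explicitly, with the matching lower bound left implicit (take $\alpha=0$). Either way, the substance is identical; your version is a touch more transparent.
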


\begin{proof}
From the definition of $B_i$, it holds that
\begin{align*}
\forall_{\lambda\in\bigcup_{i=1}^dB_i}\exists_{1\leq j\leq d,\alpha\in[0,1]}:\lambda&=\alpha\lambda_j(T^*)+(1-\alpha)\lambda_j(S^*)\\
&=\alpha\left(\lambda_j(T^*)-\lambda_j(S^*)\right)+\lambda_j(S^*)
\end{align*}
from which follows:
\begin{align*}
|\lambda|&\leq\alpha\left|\lambda_j(T^*)-\lambda_j(S^*)\right|+|\lambda_j(S^*)|\\
&\leq\alpha\|T^*-S^*\|_2+\|S^*\|_2\overset{P}{\longrightarrow}2
\end{align*}
as $d\rightarrow\infty$ using Propositions \ref{prop1b}, \ref{prop2} and \ref{prop3}.
\end{proof}

Now, we can estimate (\ref{nr1}) as follows:
\begin{align*}
\left|\frac{1}{d}\tr\left([T^*]^m\right)-\frac{1}{d}\tr\left([S^*]^m\right)\right|&\leq\frac{1}{d}\sum_{i=1}^d\left|\lambda_i^m(T^*)-\lambda_i^m(S^*)\right|\\
&\overset{(*)}{\leq} \frac{1}{d}\sum_{i=1}^dm\sup_{\lambda\in B_i}\left|\lambda^{m-1}\right|\underbrace{\left|\lambda_i(T^*)-\lambda_i(S^*)\right|}_{\leq\|T^*-S^*\|_2\text{ (Prop.  \ref{prop2})}}\\
&\leq m \ \|T^*-S^*\|_2 \frac{1}{d}\sum_{i=1}^d\underbrace{\sup_{\lambda\in B_i}|\lambda|^{m-1}}_{\leq\sup_{\lambda\in\bigcup_{i=1}^dB_i}|\lambda|^{m-1}}\\
&\leq m \underbrace{\|T^*-S^*\|_2}_{\overset{P}{\longrightarrow}0\text{ (Prop. \ref{prop1b})}} \underbrace{\left(\sup_{\lambda\in\bigcup_{i=1}^dB_i}|\lambda|\right)^{m-1}}_{\overset{P}{\longrightarrow}2^{m-1}\text{ (Prop. \ref{prop4})}}\overset{P}{\longrightarrow} 0
\end{align*}
as $d\rightarrow\infty$. The inequality $(*)$ is due to the mean value theorem for continuously differentiable functions. All in all, the convergence in (\ref{nr1}) is shown, which completes the proof of the first part of the theorem. The second part of the theorem is a simple consequence of the preceding results. We have that
\begin{align*}
|\lambda_1(T^*)+2|&\leq|\lambda_1(T^*)-\lambda_1(S^*)|+|\lambda_1(S^*)+2|\\
&\leq\|T^*-S^*\|_2+|\lambda_1(S^*)+2|\overset{P}{\longrightarrow}0,\\
|\lambda_d(T^*)-2|&\leq|\lambda_d(T^*)-\lambda_d(S^*)|+|\lambda_d(S^*)-2|\\
&\leq\|T^*-S^*\|_2+|\lambda_d(S^*)-2|\overset{P}{\longrightarrow}0
\end{align*}
as $d\rightarrow\infty$ using Propositions \ref{prop1b} and \ref{prop2} and Corollary 2.2. from \citet{dette}, which completes the proof of the theorem. 

\section{Outlook for future research}

\citet{yin2} show that, as $n,d\rightarrow\infty$ and $d/n\rightarrow y\in(0,1)$, the ESD of $S$ converges in probability to a non-random limit if the population is spherical. This limiting distribution is described by its moments and is unequal to the Mar\v{c}enko-Pastur law unless the population is standard normal. In contrast, \citet{frahm}, Section 3.2., give evidence that the limiting ESD of $T$ equals the Mar\v{c}enko-Pastur law (\ref{mplaw}) under these asymptotics if the population is generalized spherical. Regarding the proof in Section \ref{proofs}, this conjecture will be shown if one proves that $\|T-S\|_2\overset{P}{\rightarrow}0$ for a standard normal population as $n,d\rightarrow\infty$ and $d/n\rightarrow y\in(0,1)$. Note that considering $y>1$ is not possible because $T$ does not exist for $n<d$. An analysis of the proof of Theorem 5.4. in \cite{dumbgen} may provide a solution to this problem.\par
Another concern is the establishment of the almost sure convergence of the ESD of $T$ and $T^*$. Here, we need additional results on the second moments of $\|T-S\|_2$ and $\|T^*-S^*\|_2$. For example, if one could show that $\var(\|T^*-S^*\|_2)=\mathcal{O}(d^{-(1+\delta)})$ as $d\rightarrow\infty$ for some $\delta>0$, then the almost sure convergence of the ESD of $T^*$ to the semicircle law would follow by the Borel-Cantelli lemma.

\section*{Acknowledgements}

The authors are very grateful to Lutz D\"umbgen, Karl Mosler and David E. Tyler for their helpful comments and suggestions.

\section*{References}
\bibliographystyle{model2-names}
\bibliography{bibdata2}
\addcontentsline{toc}{section}{References}

\end{document}